\theoremstyle{plain}
\newtheorem{theorem}{Theorem}[section]
\newtheorem{lemma}{Lemma}[section]
\theoremstyle{definition}
\newtheorem{assumption}{Assumption}[section]
\renewcommand\@biblabel[1]{#1.}
\newcommand{\R}{\mathbb{R}}
\newcommand{\Rext}{\R\cup\{+\infty\}}
\newcommand{\set}[1]{\left\{#1\right\}}
\newcommand{\sets}[1]{\{#1\}}
\newcommand{\norm}[1]{\left\Vert#1\right\Vert}
\newcommand{\norms}[1]{\Vert#1\Vert}
\newcommand{\prox}{\mathrm{prox}}
\newcommand{\tprox}[2]{\mathrm{prox}_{#1}\left(#2\right)}
\newcommand{\dom}[1]{\mathrm{dom}(#1)}
\newcommand{\zero}[1]{{\boldsymbol{0}}}
\newcommand{\Bc}{\mathcal{B}}
\newcommand{\Fc}{\mathcal{F}}
\newcommand{\iprod}[1]{\left\langle #1\right\rangle}
\newcommand{\Exp}[1]{\mathbb{E}\left[#1\right]}
\newcommand{\Exps}[2]{\mathbb{E}_{#1}\left[#2\right]}
\newcommand{\Probn}{\mathbb{P}}
\newcommand{\BigO}[1]{\mathcal{O}\left(#1\right)}
\newcommand{\beforesec}{\vspace{-2.5ex}}
\newcommand{\aftersec}{\vspace{-1.5ex}}
\newcommand{\beforesubsec}{\vspace{-2ex}}
\newcommand{\aftersubsec}{\vspace{-1.5ex}}
\title{An Optimal Hybrid Variance-Reduced Algorithm for Stochastic Composite Nonconvex Optimization}
\author{Deyi Liu, Lam M. Nguyen, and Quoc Tran-Dinh}
\begin{document}
\maketitle

\maketitle
\begin{abstract}
In this note we propose a \textbf{new variant} of the \textit{hybrid variance-reduced proximal gradient method} in \cite{Tran-Dinh2019a} to solve a common stochastic composite nonconvex optimization problem under standard assumptions.
We simply replace the \textit{independent unbiased estimator} in our hybrid-SARAH estimator introduced in \cite{Tran-Dinh2019a} by the stochastic gradient evaluated at the same sample, leading to the identical \textit{momentum-SARAH estimator} introduced in \cite{Cutkosky2019}.
This allows us to save one stochastic gradient per iteration compared to \cite{Tran-Dinh2019a}, and only requires two samples per iteration.
Our algorithm is very simple and achieves optimal stochastic oracle complexity bound in terms of stochastic gradient evaluations (up to a constant factor).
Our analysis is essentially inspired by \cite{Tran-Dinh2019a}, but we do not use two different step-sizes.
\end{abstract}


    
\beforesec
\section{Problem Statement and Standard Assumptions}\label{sec:intro}
\aftersec
We consider the following stochastic composite and possibly nonconvex optimization problem:
\begin{equation}\label{eq:ncvx_prob}
\min_{x\in\R^p}\Big\{ F(x) := \Exps{\xi}{f_{\xi}(x)} + \psi(x) \Big\},
\end{equation}
where $f_{\xi}(\cdot) : \R^p\times \Omega \to \R$ is a stochastic function defined, such that for each $x\in\R^p$, $f_{\xi}(x)$ is a random variable in a given probability space $(\Omega, \Probn)$, while for each realization $\xi\in\Omega$, $f_{\xi}(\cdot)$ is differentiable on $\R^p$; and $f(x) := \Exps{\xi}{f_{\xi}(x)}$ is the expectation of the random function $f_{\xi}(x)$ over $\xi$ on $\Omega$; $\psi : \R^p\to\Rext$ is a proper, closed, and convex function.


%

Our algorithm developed in this note relies on the following fundamental assumptions:

\begin{assumption}\label{as:A1} The objective functions $f$ and $\psi$ of \eqref{eq:ncvx_prob} satisfies the following conditions:
\begin{itemize}
\item[$\mathrm{(a)}$] (\textbf{Convexity of $\psi$}) $\psi : \R^p\to\Rext$ is proper, closed, and convex. 
In addition, $\dom{F} := \dom{f}\cap\dom{\psi}\neq\emptyset$.

\item[$\mathrm{(b)}$] (\textbf{Boundedness from below}) 
There exists a finite lower bound 
\begin{equation}\label{eq:lower_bound}
F^{\star} := \inf_{x\in\R^p}\Big\{ F(x) := f(x) + \psi(x) \Big\} > -\infty.
\end{equation}

\item[$\mathrm{(c)}$] (\textbf{$L$-average smoothness})
The expectation function $f(\cdot)$ is $L$-smooth on $\dom{F}$, i.e., there exists $L\in (0, +\infty)$ such that
\begin{equation}\label{eq:L_smooth}
\Exps{\xi}{\norm{\nabla{f}_{\xi}(x) - \nabla{f}_{\xi}(y)}^2} \leq L^2\norms{x - y}^2,~~\forall x, y\in\dom{F}.
\end{equation}
%
\item[$\mathrm{(d)}$] (\textbf{Bounded variance})
There exists $\sigma \in [0, \infty)$ such that 
\begin{equation}\label{eq:bounded_variance2}
\Exps{\xi}{\norms{\nabla{f}_{\xi}(x) - \nabla{f}(x)}^2} \leq \sigma^2, ~~~\forall x\in\dom{F}.
\end{equation}
\end{itemize}
\end{assumption}
These assumptions are very standard in stochastic optimization and required for various gradient-based methods.
Unlike  \cite{Cutkosky2019}, we do not impose a bounded gradient assumption, i.e., $\norms{\nabla{f}(x)} \leq G$ for all $x\in\R^p$.
Algorithm~\ref{alg:A1} below has a single loop and achieves optimal oracle complexity bound since it matches the lower bound complexity in \cite{arjevani2019lower} up to a constant factor.


\beforesec
\section{Hybrid Variance-Reduced Proximal Gradient Algorithm}\label{sec:prox_hybrid}
\aftersec
We first propose a new variant of \cite[Algorithm~1]{Tran-Dinh2019a} for solving \eqref{eq:ncvx_prob} and then analyze its convergence and oracle complexity.

\beforesubsec
\subsection{Main result: Algorithm and its convergence}
\aftersubsec
We propose a \textbf{\textit{novel hybrid variance-reduced proximal gradient method}} to solve \eqref{eq:ncvx_prob} under standard assumptions (i.e., Assumption~\ref{as:A1}) as described in Algorithm~\ref{alg:A1}.

\begin{algorithm}[ht!]\caption{(\textbf{Hybrid Variance-Reduced Proximal Gradient Algorithm})}\label{alg:A1}
\normalsize
\begin{algorithmic}[1]
\itemsep=0.2em
   \State{\bfseries Initialization:} An arbitrarily initial point $x_0 \in\dom{F}$.
   \State\hspace{2ex}\label{step:o1} Choose an initial batch size $\tilde{b} \geq 1$, $\beta \in (0, 1)$, and $\eta > 0$ as in Theorem~\ref{thm:converge_rate} below.
   \State\hspace{2ex}\label{step:o2} Generate an unbiased estimator $v_0 := \frac{1}{\tilde{b}}\sum_{\tilde{\xi}_i\in\widetilde{\Bc}}\nabla{f}_{\tilde{\xi}_i}(x_0)$ at $x_0$ using a mini-batch $\widetilde{\Bc}$.
   \State\hspace{2ex}\label{step:o3} Update $x_1 := \tprox{\eta_0\psi}{x_0 - \eta_0v_0}$.
   \State\hspace{0ex}\label{step:o4}{\bfseries For $t := 1,\cdots, T$ do}
   \State\hspace{2ex}\label{step:i1} Generate a proper sample $\xi_t$ (single sample or mini-batch).
   \State\hspace{2ex}\label{step:i2} Evaluate $v_t$ and update
   \begin{equation}\label{eq:prox_hybrid_alg}
	\left\{\begin{array}{ll}
	v_t &:= \nabla{f_{\xi_t}}(x_t) + (1-\beta)\left[ v_{t-1} - \nabla{f_{\xi_t}}(x_{t-1}) \right]\vspace{1ex}\\
	x_{t+1} &:= \prox_{\eta\psi}(x_t - \eta v_t).
	\end{array}\right.
	\end{equation}
   \State\hspace{0ex}{\bfseries EndFor}
   \State\hspace{0ex}\label{step:o5}Choose $\overline{x}_T$ uniformly from $\set{x_0, x_1, \cdots, x_T}$.
\end{algorithmic}
\end{algorithm}

Compared to \cite[Algorithm~1]{Tran-Dinh2019a}, the new algorithm, Algorithm~\ref{alg:A1}, has two major differences.
First, it uses a new estimator $v_t$ adopted from \cite{Cutkosky2019}.
This estimator can also be viewed as a variant of the hybrid SARAH estimator in \cite{Tran-Dinh2019a} by using the same sample $\xi_t$ for $\nabla{f}_{\xi_t}(x_t)$.
That is
\begin{equation*}
\arraycolsep=0.2em
\begin{array}{llcll}
\text{Hybrid SARAH \cite{Tran-Dinh2019a}:} & v^{h}_t &:= & {\color{blue}(1-\beta)[v^{h}_{t-1} + \nabla{f}_{\xi_t}(x_t) - \nabla{f}_{\xi_t}(x_{t-1})]} + \beta {\color{red}\nabla{f}_{\zeta_t}(x_t)}, & {\color{red}\xi_t \neq \zeta_t}, \vspace{1ex}\\
\text{Momentum SARAH \cite{Cutkosky2019}:} & v_t &:= & {\color{blue}(1-\beta)[v_{t-1} + \nabla{f}_{\xi_t}(x_t) - \nabla{f}_{\xi_t}(x_{t-1})]} + \beta {\color{magenta}\nabla{f}_{\zeta_t}(x_t)}, &{\color{blue}\xi_t = \zeta_t}. 
\end{array}
\end{equation*}
Second, it does not require an extra damped step-size $\gamma$ as in \cite{Tran-Dinh2019a}, making Algorithm~\ref{alg:A1} simpler than the one in \cite{Tran-Dinh2019a}.

To analyze Algorithm~\ref{alg:A1}, as usual, we define the following gradient mapping of \eqref{eq:ncvx_prob}:
\begin{equation}\label{eq:grad_mapping}
G_{\eta}(x) := \tfrac{1}{\eta}\left(x - \prox_{\eta\psi}(x - \eta \nabla f(x))\right),
\end{equation}
where $\eta > 0$ is any given step-size.
It is obvious to show that $x^{\star}\in\dom{F}$ is a stationary point of \eqref{eq:ncvx_prob}, i.e., $0 \in \nabla{f}(x^{\star}) + \partial{\psi}(x^{\star})$ if and only if $G_{\eta}(x^{\star}) = 0$.
We will show that for any $\varepsilon > 0$, Algorthm~\ref{alg:A1} can find $\overline{x}_T$ such that $\Exp{\norms{G_{\eta}(\overline{x}_T)}^2} \leq \varepsilon^2$, which means that $\overline{x}_T$ is an $\varepsilon$-approximate stationary point of \eqref{eq:ncvx_prob}, where the expectation is taken over all the present randomness.

The following theorem establishes convergence of Algorithm~\ref{alg:A1} and provides oracle complexity.

\begin{theorem}\label{thm:converge_rate} 
Under Assumption~\ref{as:A1}, suppose that $\eta \in (0, \frac{1}{2L})$ is a given step-size and $0 < \frac{2L^2\eta^2}{1 - L\eta} \leq \beta < 1$.
Let $\set{x_t}_{t=0}^T$ be generated by Algorithm~\ref{alg:A1}.
Then, we have
\begin{equation}\label{eq:thm_converge_rate}
\frac{1}{T+1}\sum_{t=0}^{T}\Exp{\norms{G_{\eta}(x_t)}^2} \leq \frac{2[ F(x_0) - F^{\star}]}{\eta(T+1)}
+  \frac{\Exp{\norms{v_0 - \nabla f(x_0)}^2}}{\beta(T+1)} {~} + {~} 2\beta\sigma^2.
\end{equation}
In particular, if we choose $\eta := \frac{1}{2L(T+1)^{1/3}}$, $\beta := \frac{1}{(T+1)^{2/3}}$, and $\tilde{b} := \left\lceil \frac{(T+1)^{1/3}}{2} \right\rceil \geq 1$, then the output $\overline{x}_T$ of Algorithm~\ref{alg:A1} satisfies
\begin{equation}\label{eq:thm_opt_converge_rate}
\Exp{\norms{G_{\eta}(\overline{x}_T)}^2}
\leq \frac{4L[F(x_0) - F^{\star}] + 4\sigma^2}{(T+1)^{2/3}}. 
\end{equation}
Consequently, for any tolerance $\varepsilon > 0$, the total number of stochastic gradient evaluations in Algorithm~\ref{alg:A1} to achieves $\overline{x}_T$ such that $\Exp{\norms{G_{\eta}(\overline{x}_T)}^2} \leq\varepsilon^2$ is at most $\mathcal{T}_{\nabla{f}} := \left\lceil \frac{\Delta_0^{1/2}}{2\varepsilon} + \frac{2\Delta_0^{3/2}}{\varepsilon^3} \right\rceil$, where $\Delta_0 := 4\left[ L[F(x_0) - F^{\star}] + \sigma^2\right]$.
\end{theorem}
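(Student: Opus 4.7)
The plan is to establish a Lyapunov-type one-step inequality that couples a descent on $F$ with a variance recursion for the estimator error, telescope it over $t = 0, \ldots, T$, and then specialize the parameters.

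First I would assemble three ingredients. (i)~An inexact proximal-gradient descent lemma: combining $L$-smoothness of $f$ with the prox optimality of $x_{t+1} = \prox_{\eta\psi}(x_t - \eta v_t)$ (tested at $y = x_t$), together with Young's inequality, should give
\begin{equation*}
F(x_{t+1}) \leq F(x_t) - \tfrac{\eta}{2}\norms{\widetilde{G}_{\eta}(x_t)}^2 + c_1 \norms{e_t}^2 ,
\end{equation*}
where $\widetilde{G}_{\eta}(x_t) := \tfrac{1}{\eta}(x_t - x_{t+1})$ is the implementable gradient mapping, $e_t := v_t - \nabla f(x_t)$, and $c_1 > 0$ depends on the Young's split. (ii)~A variance recursion for $v_t$: writing the momentum-SARAH update as
\begin{equation*}
e_t = (1-\beta)\, e_{t-1} + \bigl[\nabla f_{\xi_t}(x_t) - \nabla f(x_t)\bigr] - (1-\beta)\bigl[\nabla f_{\xi_t}(x_{t-1}) - \nabla f(x_{t-1})\bigr] ,
\end{equation*}
the last two bracketed terms have zero mean conditional on $\Fc_{t-1}$, and Assumption~\ref{as:A1}(c,d) together with Young's give
\begin{equation*}
\Exp{\norms{e_t}^2 \mid \Fc_{t-1}} \leq (1-\beta)^2 \norms{e_{t-1}}^2 + 2L^2 \norms{x_t - x_{t-1}}^2 + 2\beta^2 \sigma^2 .
\end{equation*}
(iii)~By non-expansiveness of $\prox_{\eta\psi}$, $\norms{G_{\eta}(x_t) - \widetilde{G}_{\eta}(x_t)} \leq \norms{e_t}$, hence $\norms{G_{\eta}(x_t)}^2 \leq 2\norms{\widetilde{G}_{\eta}(x_t)}^2 + 2\norms{e_t}^2$.

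With these in hand I would form a Lyapunov $\Phi_t := F(x_t) - F^{\star} + \alpha \norms{e_t}^2$ and pick $\alpha$ so that, after substituting $\norms{x_t - x_{t-1}}^2 = \eta^2 \norms{\widetilde{G}_{\eta}(x_{t-1})}^2$ from the update rule into (ii), the $\norms{e_t}^2$ contributions cancel in $\Exp{\Phi_{t+1} - \Phi_t \mid \Fc_t}$. The hypothesis $\beta \geq \tfrac{2L^2\eta^2}{1-L\eta}$ is exactly what keeps the coefficient on $\norms{\widetilde{G}_{\eta}(x_t)}^2$ non-positive, and leaves a per-step stochastic noise of order $\eta\beta\sigma^2$. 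Summing from $0$ to $T$, using $\Exp{\Phi_{T+1}} \geq 0$ and the uniform sampling of $\overline{x}_T$, then converting $\widetilde{G}_\eta$ back to $G_\eta$ via (iii), yields \eqref{eq:thm_converge_rate}.

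For the quantitative part I would plug in $\eta = \tfrac{1}{2L(T+1)^{1/3}}$ and $\beta = (T+1)^{-2/3}$, verify the hypothesis by the direct estimate $\tfrac{2L^2\eta^2}{1-L\eta} \leq 4L^2\eta^2 = (T+1)^{-2/3} = \beta$ using $L\eta \leq 1/2$, and bound $\Exp{\norms{v_0 - \nabla f(x_0)}^2} \leq \sigma^2/\tilde{b} \leq 2\sigma^2/(T+1)^{1/3}$ from the initial mini-batch. Each of the three right-hand-side terms of \eqref{eq:thm_converge_rate} is then a constant multiple of $(T+1)^{-2/3}$, producing \eqref{eq:thm_opt_converge_rate}. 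Finally, inverting $\Delta_0/(T+1)^{2/3} \leq \varepsilon^2$ for $T$ and adding the $\tilde{b}$ initial samples to the two per-iteration stochastic gradient evaluations of \eqref{eq:prox_hybrid_alg} yields the stated $\mathcal{T}_{\nabla f}$. The main obstacle I anticipate is the constant-bookkeeping step: selecting $\alpha$ and the Young's split in (i) so that the three coefficients land exactly on $\tfrac{2}{\eta(T+1)}$, $\tfrac{1}{\beta(T+1)}$, and $2\beta\sigma^2$ as stated in \eqref{eq:thm_converge_rate}, while still accommodating the near-boundary regime $\beta \approx \tfrac{2L^2\eta^2}{1-L\eta}$.
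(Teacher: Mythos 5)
Your overall architecture---a one-step descent estimate, the variance recursion for $e_t := v_t - \nabla f(x_t)$, telescoping, and then the parameter choices---matches the paper, and your item (ii) is exactly the paper's Lemma~\ref{lm:variance_redu} (your decomposition of $e_t$ needs to be regrouped as $(1-\beta)[\text{noise}(x_t)-\text{noise}(x_{t-1})]+\beta\,\text{noise}(x_t)$ to get the $2\beta^2\sigma^2$ term, but that is a one-line rearrangement). The paper even remarks that a Lyapunov-function packaging of the argument works, so that part of your plan is a legitimate alternative to the paper's unrolling-and-swapping of the double sum. The genuinely different step is your (i)$+$(iii): you run the descent lemma on the implementable mapping $\widetilde{G}_\eta(x_t)=\tfrac{1}{\eta}(x_t-x_{t+1})$ and convert to $G_\eta(x_t)$ at the end via non-expansiveness of the prox, paying $\norms{G_\eta(x_t)}^2\le 2\norms{\widetilde{G}_\eta(x_t)}^2+2\norms{e_t}^2$. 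The ``constant-bookkeeping obstacle'' you anticipate is real here and is not removable within this route: the factor $2$ in front of $\norms{\widetilde{G}_\eta(x_t)}^2$ and the extra $2\norms{e_t}^2$ force the coefficient of $\Exp{\norms{G_\eta(x_t)}^2}$ in the telescoped inequality down to $\tfrac{\eta(1-L\eta)}{4}$ rather than $\tfrac{\eta}{2}$, so you can only reach \eqref{eq:thm_converge_rate} and \eqref{eq:thm_opt_converge_rate} with strictly larger constants (and a correspondingly more restrictive coupling of $\beta$ and $\eta$), i.e., the stated complexity only up to a constant factor. The paper's Lemma~\ref{lm:one-step_obj} avoids this loss entirely: it introduces the auxiliary point $\bar{x}_t:=\prox_{\eta\psi}(x_t-\eta\nabla f(x_t))$, adds the strongly convex prox-optimality inequalities for $\bar{x}_t$ (tested at $x_t$) and for $x_{t+1}$ (tested at $\bar{x}_t$), so that $-\tfrac{\eta}{2}\norms{G_\eta(x_t)}^2$ appears exactly, and then absorbs the cross term $\iprods{v_t-\nabla f(x_t),\,x_{t+1}-\bar{x}_t}$ by Young's inequality into the leftover $-\tfrac{1}{2\eta}\norms{\bar{x}_t-x_{t+1}}^2$ at a cost of only $\tfrac{\eta}{2}\norms{e_t}^2$. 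If you replace your (i) and (iii) by that three-point comparison, the remainder of your plan---including the verification $\tfrac{2L^2\eta^2}{1-L\eta}\le 4L^2\eta^2=\beta$ and the bound $\Exp{\norms{v_0-\nabla f(x_0)}^2}\le\sigma^2/\tilde{b}$---goes through and lands on the constants as stated.
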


Theorem~\ref{thm:converge_rate} shows that the oracle complexity of Algorithm~\ref{alg:A1} is $\BigO{\frac{\Delta_0^{1/2}}{\varepsilon} + \frac{\Delta_0^{3/2}}{\varepsilon^3}}$ as in \cite{Tran-Dinh2019a}, where $\Delta_0 := 4( L[F(x_0) - F^{\star}] + \sigma^2)$.
This complexity bound in fact matches the lower bound one in \cite{arjevani2019lower} up to a constant factor under the same assumptions as in Assumption~\ref{as:A1}.
Hence, we conclude that Algorithm~\ref{alg:A1} is \textit{optimal}.

\beforesubsec
\subsection{Convergence Analysis}
\aftersubsec
Let us denote by $\Fc_t := \sigma(\xi_0, \xi_1, \cdots, \xi_t)$ the $\sigma$-filed generated by $\sets{\xi_0, \xi_1,  \cdots, \xi_t}$.
We also denote by $\Exp{\cdot}$ the full expectation over the history $\Fc_t$.
The following lemma establishes a key estimate for our convergence analysis.
We emphasize that Lemma~\ref{lm:variance_redu} is self-contained and can be applied to other types of estimators, e.g., Hessian, and other problems.

\begin{lemma}\label{lm:variance_redu}
Let $v_t$ be computed by \eqref{eq:prox_hybrid_alg} for $\beta \in (0, 1)$.
Then, under Assumption~\ref{as:A1}, we have
\begin{equation}\label{eq:variance_redu}
{\color{blue}\Exps{\xi_t}{\norms{v_t - \nabla f(x_t)}^2} \leq (1-\beta)^2\norms{v_{t-1} - \nabla f(x_{t-1})}^2 + 2(1-\beta)^2L^2\norms{x_t - x_{t-1}}^2 + 2\beta^2\sigma^2.}
\end{equation}
Therefore, by induction, we have
\begin{equation}\label{eq:variance_redu_full}
\begin{array}{lcl}
\Exp{\norms{v_t - \nabla f(x_t)}^2} & \leq & (1-\beta)^{2t}\Exp{\norms{v_0 - \nabla f(x_0)}^2}  + 2\beta\sigma^2 \vspace{1ex}\\
&& + {~} 2L^2\sum_{i=0}^{t-1}(1-\beta)^{2(t-i)}\Exp{\norms{x_{i+1} - x_i}^2}.
\end{array}
\end{equation}
\end{lemma}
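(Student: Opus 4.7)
The plan is to prove the one-step conditional bound \eqref{eq:variance_redu} first, and then obtain \eqref{eq:variance_redu_full} by iterating the resulting recursion. For the one-step bound I would add and subtract $\nabla{f}(x_{t-1})$ inside the update \eqref{eq:prox_hybrid_alg} to produce the decomposition $v_t - \nabla{f}(x_t) = (1-\beta)[v_{t-1} - \nabla{f}(x_{t-1})] + B_t$, where $B_t := [\nabla{f}_{\xi_t}(x_t) - \nabla{f}(x_t)] - (1-\beta)[\nabla{f}_{\xi_t}(x_{t-1}) - \nabla{f}(x_{t-1})]$. The advantage of this rearrangement is that $v_{t-1}$, $x_{t-1}$, and $x_t$ are all $\Fc_{t-1}$-measurable while $\Exps{\xi_t}{B_t} = 0$, since each bracketed difference has zero conditional mean. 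Applying $\Exps{\xi_t}{\norms{\cdot}^2}$ annihilates the cross term and gives $\Exps{\xi_t}{\norms{v_t - \nabla{f}(x_t)}^2} = (1-\beta)^2 \norms{v_{t-1} - \nabla{f}(x_{t-1})}^2 + \Exps{\xi_t}{\norms{B_t}^2}$.

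To control the residual $\Exps{\xi_t}{\norms{B_t}^2}$, I would split $B_t = (1-\beta)U_t + \beta W_t$ with $U_t := [\nabla{f}_{\xi_t}(x_t) - \nabla{f}_{\xi_t}(x_{t-1})] - [\nabla{f}(x_t) - \nabla{f}(x_{t-1})]$ (the $\xi_t$-centered gradient increment) and $W_t := \nabla{f}_{\xi_t}(x_t) - \nabla{f}(x_t)$. The elementary inequality $\norms{a+b}^2 \leq 2\norms{a}^2 + 2\norms{b}^2$ yields $\norms{B_t}^2 \leq 2(1-\beta)^2\norms{U_t}^2 + 2\beta^2 \norms{W_t}^2$. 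Since centering only reduces the second moment, Assumption~\ref{as:A1}(c) controls $\Exps{\xi_t}{\norms{U_t}^2}$ by $L^2\norms{x_t - x_{t-1}}^2$, while Assumption~\ref{as:A1}(d) controls $\Exps{\xi_t}{\norms{W_t}^2}$ by $\sigma^2$. Assembling these estimates produces exactly \eqref{eq:variance_redu}.

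For \eqref{eq:variance_redu_full}, I would take the full expectation of \eqref{eq:variance_redu} over $\Fc_t$, set $e_t := \Exp{\norms{v_t - \nabla{f}(x_t)}^2}$, and unroll the contraction $e_t \leq (1-\beta)^2 e_{t-1} + 2(1-\beta)^2 L^2 \Exp{\norms{x_t - x_{t-1}}^2} + 2\beta^2 \sigma^2$. The constant-noise contribution telescopes into a geometric series bounded by $2\beta^2 \sigma^2 / [1-(1-\beta)^2] = 2\beta\sigma^2/(2-\beta) \leq 2\beta\sigma^2$, and a reindexing turns the trajectory terms into the claimed sum $2L^2 \sum_{i=0}^{t-1}(1-\beta)^{2(t-i)}\Exp{\norms{x_{i+1} - x_i}^2}$. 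I expect the main obstacle to be the convex split $B_t = (1-\beta)U_t + \beta W_t$: an asymmetric decomposition would introduce a $1/\beta$-type factor in the variance term and spoil the telescoping that collapses the noise contribution into the clean $2\beta\sigma^2$, so choosing precisely the weights $(1-\beta)$ and $\beta$ in the second step is what enables the unrolling in the third step to close.
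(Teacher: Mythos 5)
Your proposal is correct and follows essentially the same route as the paper: your decomposition $B_t = (1-\beta)U_t + \beta W_t$ is exactly the paper's split into $a_t$ and $b_t$, and the subsequent steps (vanishing cross term, $\norms{a+b}^2 \le 2\norms{a}^2 + 2\norms{b}^2$, centering reduces the second moment, $L$-average smoothness, bounded variance, then unrolling with the geometric-series bound $\frac{1-(1-\beta)^{2t}}{1-(1-\beta)^2} \le \frac{1}{\beta}$) match the paper's proof step for step.
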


\begin{proof}
Let us denote
\begin{equation*}
a_t := (1-\beta)\left[\nabla{f_{\xi_t}}(x_t) - \nabla f(x_t) - \nabla{f_{\xi_t}}(x_{t-1})  + \nabla f(x_{t-1})\right] \quad\text{and}\quad b_t := \beta\left[\nabla{f_{\xi_t}}(x_t) - \nabla f(x_t)\right].
\end{equation*}
Since $\Exps{\xi_t}{a_t} = \Exps{\xi_t}{b_t} = 0$, and \eqref{eq:L_smooth}, we can derive \eqref{eq:variance_redu} as follows: 
\begin{equation*}
\arraycolsep=0.2em
\begin{array}{lcl}
\Exps{\xi_t}{\norms{v_t - \nabla f(x_t)}^2}  &= & \Exps{\xi_t}{\norms{\nabla{f_{\xi_t}}(x_t) + (1-\beta)(v_{t-1} - \nabla{f_{\xi_t}}(x_{t-1})) - \nabla f(x_t)}^2} \vspace{1ex}\\
& = & \Exps{\xi_t}{\norms{(1-\beta)[v_{t-1} - \nabla{f}(x_{t-1})] + a_t + b_t }^2} \vspace{1ex}\\
& = &  (1-\beta)^2\norms{v_{t-1} - \nabla{f}(x_{t-1})}^2 + \Exps{\xi_t}{\norms{a_t + b_t}^2} \vspace{1ex}\\
& \leq & (1-\beta)^2\norms{v_{t-1} - \nabla f(x_{t-1})}^2 + 2\Exps{\xi_t}{\norms{a_t}^2} + 2\Exps{\xi_t}{\norms{b_t}^2} \vspace{1ex}\\
& \leq & (1-\beta)^2\norms{v_{t-1} - \nabla f(x_{t-1})}^2 + 2(1-\beta)^2\Exps{\xi_t}{\norms{\nabla{f_{\xi_t}}(x_t) - \nabla{f_{\xi_t}}(x_{t-1})}^2} \vspace{1ex}\\
&& + {~} 2\beta^2\Exps{\xi_t}{\norms{\nabla{f_{\xi_t}}(x_t) - \nabla f(x_t)}^2} \vspace{1ex}\\
& \leq & (1-\beta)^2\norms{v_{t-1} - \nabla f(x_{t-1})}^2 + 2(1-\beta)^2L^2\norms{x_t - x_{t-1}}^2 + 2\beta^2\sigma^2.
\end{array}
\end{equation*}
Taking the full expectation over the full history $\Fc_{t}$ of \eqref{eq:variance_redu}, and noticing that for $\beta \in (0, 1)$, $\frac{1 - (1-\beta)^{2t}}{1 - (1-\beta)^{2}} \leq \frac{1}{\beta}$, by induction, we can show that
\begin{equation*}
\arraycolsep=0.2em
\begin{array}{lcl}
\Exp{\norms{v_t - \nabla f(x_t)}^2}   & \leq & (1-\beta)^{2t}\Exp{\norms{v_0 - \nabla f(x_0)}^2} + 2\beta^2\sigma^2\frac{1 - (1-\beta)^{2t}}{1 - (1-\beta)^{2}}\vspace{1ex}\\
&& + {~} 2L^2\sum_{i=0}^{t-1}(1-\beta)^{2(t-i)}\Exp{\norms{x_{i+1} - x_i}^2} \vspace{1ex}\\
& \leq & (1-\beta)^{2t}\Exp{\norms{v_0 - \nabla f(x_0)}^2}  + 2\beta\sigma^2 \vspace{1ex}\\
&& + {~} 2L^2\sum_{i=0}^{t-1}(1-\beta)^{2(t-i)}\Exp{\norms{x_{i+1} - x_i}^2}.
\end{array}
\end{equation*}
This  proves \eqref{eq:variance_redu_full}.
\end{proof}

Next, we prove another property of our composite function $F$ in \eqref{eq:ncvx_prob}.
\begin{lemma}\label{lm:one-step_obj}
Let $\set{x_t}$ be generated by Algorithm~\ref{alg:A1} for solving \eqref{eq:ncvx_prob} and $G_{\eta}$ be defined by \eqref{eq:grad_mapping}.
Then, under Assumption~\ref{as:A1}, we have
\begin{equation}\label{eq:one-step_obj}
\begin{array}{lcl}
\Exp{F(x_{t+1}) - F^{\star}} & \leq & \Exp{F(x_t) - F^{\star}} - \left(\frac{1}{2\eta} - \frac{L}{2}\right)\Exp{\norms{x_{t+1} - x_t}^2} - \frac{\eta}{2}\Exp{\norms{G_{\eta}(x_t)}^2} \vspace{1ex}\\
&& + {~} \frac{\eta}{2}\Exp{\norms{\nabla f(x_t) - v_t}^2}.
\end{array}
\end{equation}
\end{lemma}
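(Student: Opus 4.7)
The plan is to introduce the auxiliary proximal iterate $\hat{x}_{t+1} := \prox_{\eta\psi}(x_t - \eta\nabla{f}(x_t))$, so that, by the definition \eqref{eq:grad_mapping} of $G_\eta$, one has $x_t - \hat{x}_{t+1} = \eta G_\eta(x_t)$ and hence $\tfrac{1}{2\eta}\norms{x_t - \hat{x}_{t+1}}^2 = \tfrac{\eta}{2}\norms{G_\eta(x_t)}^2$. The derivation then combines three pointwise estimates: the $L$-smoothness of $f$ evaluated at $x_{t+1}$ around $x_t$, the three-point (strongly-convex) prox-inequality for $x_{t+1} = \prox_{\eta\psi}(x_t - \eta v_t)$ tested at the non-standard point $y = \hat{x}_{t+1}$, and the analogous prox-inequality for $\hat{x}_{t+1}$ tested at $y = x_t$.

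Concretely, I would first write the smoothness bound $f(x_{t+1}) \leq f(x_t) + \iprods{\nabla{f}(x_t),\, x_{t+1} - x_t} + \tfrac{L}{2}\norms{x_{t+1} - x_t}^2$. The prox-inequality for $x_{t+1}$ at $y = \hat{x}_{t+1}$ produces
\begin{equation*}
\psi(x_{t+1}) \leq \psi(\hat{x}_{t+1}) + \tfrac{\eta}{2}\norms{G_\eta(x_t)}^2 - \tfrac{1}{2\eta}\norms{x_{t+1} - x_t}^2 - \tfrac{1}{2\eta}\norms{x_{t+1} - \hat{x}_{t+1}}^2 - \iprods{v_t,\, x_{t+1} - \hat{x}_{t+1}},
\end{equation*}
while the one for $\hat{x}_{t+1}$ at $y = x_t$ gives $\psi(\hat{x}_{t+1}) \leq \psi(x_t) + \eta\iprods{\nabla{f}(x_t),\, G_\eta(x_t)} - \eta\norms{G_\eta(x_t)}^2$. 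The key cancellation is that the $+\tfrac{\eta}{2}\norms{G_\eta(x_t)}^2$ from the first prox-inequality combines with the $-\eta\norms{G_\eta(x_t)}^2$ from the second to leave exactly $-\tfrac{\eta}{2}\norms{G_\eta(x_t)}^2$.

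Summing these three estimates and using the identity $\iprods{\nabla{f}(x_t),\, x_{t+1} - x_t} + \eta\iprods{\nabla{f}(x_t),\, G_\eta(x_t)} = \iprods{\nabla{f}(x_t),\, x_{t+1} - \hat{x}_{t+1}}$, which is immediate from $\eta G_\eta(x_t) = x_t - \hat{x}_{t+1}$, consolidates all $\nabla{f}(x_t)$- and $v_t$-contributions into one cross term $\iprods{\nabla{f}(x_t) - v_t,\, x_{t+1} - \hat{x}_{t+1}}$. Young's inequality with weight $\eta$ bounds this by $\tfrac{\eta}{2}\norms{\nabla{f}(x_t) - v_t}^2 + \tfrac{1}{2\eta}\norms{x_{t+1} - \hat{x}_{t+1}}^2$; the second summand precisely cancels the matching negative term carried from the first prox-inequality, and taking the full expectation over $\Fc_t$ delivers \eqref{eq:one-step_obj}.

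The main obstacle, and the only non-obvious design choice, is the test-point selection. The familiar derivation, which tests both prox-inequalities at $y = x_t$, yields only $F(x_{t+1}) \leq F(x_t) - \bigl(\tfrac{1}{2\eta} - \tfrac{L}{2}\bigr)\norms{x_{t+1} - x_t}^2 + \tfrac{\eta}{2}\norms{\nabla{f}(x_t) - v_t}^2$ with no $G_\eta$ term. Choosing $y = \hat{x}_{t+1}$ in the first prox-inequality is what injects $\norms{x_t - \hat{x}_{t+1}}^2 = \eta^2\norms{G_\eta(x_t)}^2$ into the analysis, and the second prox-inequality at $\hat{x}_{t+1}$ is what ensures the resulting $\norms{G_\eta(x_t)}^2$-coefficient flips sign; all remaining manipulations are purely algebraic.
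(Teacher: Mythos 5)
Your proposal is correct and follows essentially the same route as the paper: the auxiliary point $\hat{x}_{t+1}$ is exactly the paper's $\bar{x}_t := \prox_{\eta\psi}(x_t - \eta\nabla f(x_t))$, the two prox-inequalities are tested at the same points ($x_{t+1}$ against $\bar{x}_t$, and $\bar{x}_t$ against $x_t$), and the cross term $\iprods{\nabla f(x_t) - v_t,\, x_{t+1} - \bar{x}_t}$ is handled by the same Young's inequality with the same cancellation of $\tfrac{1}{2\eta}\norms{x_{t+1} - \bar{x}_t}^2$. The only differences are notational rearrangements of the intermediate inequalities.
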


\begin{proof}
Let us denote by $\bar{x}_t := \prox_{\eta\psi}(x_t - \eta \nabla f(x_t))$.
From the optimality condition of this proximal operator, we have
\begin{equation*}
\iprod{\nabla f(x_t), \bar{x}_t - x_t} + \frac{1}{2\eta}\norms{\bar{x}_t - x_t}^2 + \psi(\bar{x}_t) \leq \psi(x_t) - \frac{1}{2\eta}\norms{x_t - \bar{x}_t}^2.
\end{equation*}
Similarly, from $x_{t+1} = \prox_{\eta\psi}(x_t - \eta v_t)$, we also have
\begin{equation*}
\iprod{v_t, x_{t+1} - x_t} + \frac{1}{2\eta}\norms{x_{t+1} - x_t}^2 + \psi(x_{t+1}) \leq \iprod{v_t, \bar{x}_t - x_t} + \frac{1}{2\eta}\norms{\bar{x}_t - x_t}^2 + \psi(\bar{x}_t) - \frac{1}{2\eta}\norms{\bar{x}_t - x_{t+1}}^2.
\end{equation*}
Combining the last two inequalities, we can show that
\begin{equation}\label{eq:key_eq1}
\begin{array}{lcl}
\psi(x_{t+1}) + \frac{1}{2\eta}\norms{x_{t+1} - x_t}^2 & \leq & \psi(x_t) - \frac{\eta}{2}\norms{G_{\eta}(x_t)}^2  - \frac{1}{2\eta}\norms{\bar{x}_t - x_{t+1}}^2\vspace{1ex}\\
&& + {~} \iprod{v_t, \bar{x}_t - x_{t+1}} - \iprod{\nabla f(x_t), \bar{x}_t - x_{t}}.
\end{array}
\end{equation}
By the Cauchy-Schwarz inequality, for any $\eta > 0$, we easily get
\begin{equation}\label{eq:key_eq2}
\iprod{\nabla f(x_t) - v_t, x_{t+1} - \bar{x}_t} \leq \frac{\eta}{2}\norms{\nabla f(x_t) - v_t}^2 + \frac{1}{2\eta}\norms{x_{t+1} - \bar{x}_t}^2.
\end{equation}
Finally, using the $L$-average smoothness of $f$, we can derive
\begin{equation*}
\arraycolsep=0.2em
\begin{array}{lcl}
f(x_{t+1}) + \psi(x_{t+1})  &\leq & f(x_t) + \iprod{\nabla f(x_t), x_{t+1} - x_t} + \frac{L}{2}\norms{x_{t+1} - x_t}^2 + \psi(x_{t+1}) \vspace{1ex}\\
& = &  f(x_t) - (\frac{1}{2\eta} - \frac{L}{2})\norms{x_{t+1} - x_t}^2 + \iprod{\nabla f(x_t), x_{t+1} - x_t} \vspace{1ex}\\
&& + {~} \psi(x_{t+1}) + \frac{1}{2\eta}\norms{x_{t+1} - x_t}^2 \vspace{1ex}\\
& \overset{\eqref{eq:key_eq1}}{\leq}  & f(x_t) - (\frac{1}{2\eta} - \frac{L}{2})\norms{x_{t+1} - x_t}^2 + \psi(x_t) + \iprod{\nabla f(x_t) - v_t, x_{t+1} - \bar{x}_t} \vspace{1ex}\\
&&  - {~} \frac{\eta}{2}\norms{G_{\eta}(x_t)}^2  - \frac{1}{2\eta}\norms{\bar{x}_t - x_{t+1}}^2\\
& \overset{\eqref{eq:key_eq2}}{\leq} &  f(x_t) + \psi(x_t) - (\frac{1}{2\eta} - \frac{L}{2})\norms{x_{t+1} - x_t}^2  + \frac{\eta}{2}\norms{\nabla f(x_t) - v_t}^2 - \frac{\eta}{2}\norms{G_{\eta}(x_t)}^2.\\
\end{array}
\end{equation*}
Taking the full expectation of both sides of the last inequality and noting that $F = f + \psi$, we obtain \eqref{eq:one-step_obj}.
\end{proof}

Now, we are ready to prove our main result, Theorem~\ref{thm:converge_rate} above.

\begin{proof}[\textbf{The proof of Theorem~\ref{thm:converge_rate}}]
First, summing up \eqref{eq:variance_redu_full} from $t :=0$ to $t := T$, we get
\begin{equation}\label{eq:variance_redu_sum}
\arraycolsep=0.2em
\begin{array}{lcl}
\sum_{t = 0}^{T}\Exp{\norms{v_t - \nabla f(x_t)}^2}  & \leq & \sum_{t = 0}^{T}(1-\beta)^{2t}\norms{v_0 - \nabla f(x_0)}^2  + 2(T+1)\beta\sigma^2 \vspace{1ex}\\
&& + {~} 2L^2\sum_{t = 0}^{T}\sum_{i=0}^{t-1}(1-\beta)^{2(t-i)}\Exp{\norms{x_{i+1} - x_i}^2} \vspace{1ex}\\
& \leq & \frac{1}{\beta}\norms{v_0 - \nabla f(x_0)}^2  + 2(T + 1)\beta\sigma^2 \vspace{1ex}\\
&& + {~} 2L^2\sum_{i = 0}^{T-1}\sum_{t=i+1}^{T}(1-\beta)^{2(t-i)}\Exp{\norms{x_{i+1} - x_i}^2} \vspace{1ex}\\
& \leq & \frac{1}{\beta}\norms{v_0 - \nabla f(x_0)}^2  + 2(T+1)\beta\sigma^2. \vspace{1ex}\\
&& + {~} 2L^2\sum_{i = 0}^{T-1}\frac{1}{\beta}\Exp{\norms{x_{i+1} - x_i}^2}.
\end{array}
\end{equation}
Next, summing up \eqref{eq:one-step_obj} from $t := 0$ to $t := T$, we obtain
\begin{equation*}
\arraycolsep=0.2em
\begin{array}{lcl}
\Exp{F(x_{T+1}) - F^{\star}}  & \leq & \left[ F(x_0) - F^{\star}\right] - \frac{\eta}{2}\sum_{t=0}^{T}\Exp{\norms{G_{\eta}(x_t)}^2} - \sum_{t=0}^{T}\left(\frac{1}{2\eta} - \frac{L}{2}\right)\Exp{\norms{x_{t+1} - x_t}^2} \vspace{1ex}\\
&& + {~} \frac{\eta}{2}\sum_{t=0}^{T}\Exp{\norms{v_t - \nabla f(x_t)}^2} \vspace{1ex}\\
& \overset{\eqref{eq:variance_redu_sum}}{\leq} &  \left[ F(x_0) - F^{\star}\right]  - \frac{\eta}{2}\sum_{t=0}^{T}\Exp{\norms{G_{\eta}(x_t)}^2} - \sum_{t=0}^{T}\left(\frac{1}{2\eta} - \frac{L}{2}\right)\Exp{\norms{x_{t+1} - x_t}^2} \vspace{1ex}\\
&& + {~} \frac{\eta}{2\beta}\Exp{\norms{v_0 - \nabla f(x_0)}^2} + \sum_{i = 0}^{T-1}\frac{L^2\eta}{\beta}\Exp{\norms{x_{i+1} - x_i}^2} + (T+1)\eta\beta\sigma^2.
\end{array}
\end{equation*}
Since $\eta \in \left(0, \frac{1}{2L}\right)$, we have $0 <  \frac{2L^2\eta^2}{1 - L\eta} < 1$.
Suppose $\frac{1}{2\eta} - \frac{L}{2} \geq \frac{L^2\eta}{\beta}$, i.e., $\beta \geq \frac{2L^2\eta^2}{1 - L\eta}$,  we have
\begin{equation*}
\Exp{F(x_{T+1}) - F^{\star}} \leq \left[F(x_0) - F^{\star}\right] - \frac{\eta}{2}\sum_{t=0}^{T}\Exp{\norms{G_{\eta}(x_t)}^2} 
+ \frac{\eta}{2\beta}\Exp{\norms{v_0 - \nabla f(x_0)}^2} + (T+1)\eta\beta\sigma^2,
\end{equation*}
which leads to \eqref{eq:thm_converge_rate}.

Now, if we choose $\eta := \frac{1}{2L(T+1)^{1/3}}$ and $\beta := \frac{1}{(T+1)^{2/3}}$, then we can verify that $\beta \geq \frac{2L^2\eta^2}{1-L\eta}$.
Moreover, \eqref{eq:thm_converge_rate} becomes
\begin{equation*} 
\frac{1}{T+1}\sum_{t=0}^{T}\Exp{\norms{G_{\eta}(x_t)}^2} \leq \frac{4L}{(T+1)^{2/3}}[F(x_0) - F^{\star}]
+ \frac{2\sigma^2}{(T+1)^{2/3}} + \frac{\Exp{\norms{v_0 - \nabla f(x_0)}^2}}{(T+1)^{1/3}}.
\end{equation*}
By Step~\ref{step:o2} of Algorithm~\ref{alg:A1} and the choice $\tilde{b} := \left\lceil \frac{(T+1)^{1/3}}{2} \right\rceil$, we have $\Exp{\norms{v_0 - \nabla{f}(x_0)}^2} \leq \frac{\sigma^2}{\tilde{b}} \leq \frac{2\sigma^2}{(T+1)^{1/3}}$.
Substituting this bound into the previous one and using $\Exp{\norms{G_{\eta}(\overline{x}_T)}^2} = \frac{1}{T+1}\sum_{t=0}^{T}\Exp{\norms{G_{\eta}(x_t)}^2}$, we obtain \eqref{eq:thm_opt_converge_rate}.

Finally, from \eqref{eq:thm_opt_converge_rate}, to guarantee $\Exp{\norms{G_{\eta}(\overline{x}_T)}^2} \leq \varepsilon^2$, we have $T+1 \geq \frac{\Delta_0^{3/2}}{\varepsilon^3}$, where $\Delta_0 := 4L[F(x_0) - F^{\star}] + 4\sigma^2$.
We can take $T := \left\lceil \frac{\Delta_0^{3/2}}{\varepsilon^3}\right\rceil$.
Therefore, the number of stochastic gradient evaluation is $\mathcal{T}_{\nabla{f}} = \tilde{b} + 2T =  \frac{\Delta_0^{1/2}}{2\varepsilon} + \frac{2\Delta_0^{3/2}}{\varepsilon^3}$.
Rounding it, we obtain $\mathcal{T}_{\nabla{f}}  = \left\lceil \frac{\Delta_0^{1/2}}{2\varepsilon} + \frac{2\Delta_0^{3/2}}{\varepsilon^3} \right\rceil$.
\end{proof}

\beforesec
\section{Concluding Remarks and Outlook}\label{sec:conc_remark}
\aftersec
Theorem~\ref{thm:converge_rate} only analyzes  a simple variant of Algorithm~\ref{alg:A1} with constant step-size $\eta = \BigO{\frac{1}{T^{1/3}}}$ and constant weight $\beta = \BigO{\frac{1}{T^{2/3}}}$.
It also uses a large initial mini-batch of size $\tilde{b} = \BigO{T^{1/3}}$.
Compared to SARAH-based methods, e.g., in \cite{fang2018spider,nguyen2017sarah,Pham2019}, Algorithm~\ref{alg:A1} is simpler since it is single-loop.
At each iteration, it uses only two samples compared to three ones in  \cite{Tran-Dinh2019a}.
We remark that the convergence of Algorithm~\ref{alg:A1} can be established by means of Lyapunov function as in  \cite{Tran-Dinh2019a}.

The result of this note can be extended into different directions:
\begin{compactitem}
\item We can also adapt our analysis to mini-batch, adaptive step-size $\eta_t$, and adaptive weight $\beta_t$ variants as in \cite{TranDinh2020f}.
If we use adaptive weight $\beta_t$ as in \cite{TranDinh2020f}, then we can remove the initial batch $\tilde{b}$ at Step~\ref{step:o2} of Algorithm~\ref{alg:A1}.
However, the convergence rate in Theorem~\ref{thm:converge_rate} will be $\BigO{\frac{\log(T)}{T^{2/3}}}$ instead of $\BigO{\frac{1}{T^{2/3}}}$.
The rate $\BigO{\frac{\log(T)}{T^{2/3}}}$ matches the result of \cite{Cutkosky2019} without bounded gradient assumption.

\item Our results, especially, Lemma~\ref{lm:variance_redu}, here can be applied to develop stochastic algorithms for solving other optimization problems such as compositional nonconvex optimization,  minimax problems, and reinforcement learning.

\item The idea here can also be extended to develop second-order methods such as sub-sampled and sketching Newton or cubic regularization-based methods.
\end{compactitem}
It is also interesting to incorporate this idea with adaptive schemes as done in \cite{Cutkosky2019} by developing different strategies such as curvature aid or quasi-Newton methods.

\bibliographystyle{plain}

\vspace{3ex}
\noindent 
\textbf{Authors' information:}\\
\noindent 
Deyi Liu and Quoc Tran-Dinh$^{*}$\\
Department of Statistics and Operations Research\\
The University of North Carolina at Chapel Hill\\
Chapel Hill, NC 27599 \\
\textit{Email:} \texttt{deyi@live.unc.edu,quoctd@email.unc.edu} \vspace{0.5ex}\\
$^{*}$\textit{Corresponding author}.

\vspace{1ex}
\noindent 
Lam M. Nguyen, IBM Research, Thomas J. Watson Research Center, NY10598\\
\textit{Email:} \texttt{lamnguyen.mltd@ibm.com}

\end{document}